\newcommand{\NN}{\mathbb{N}}
\newcommand{\RR}{\mathbb{R}}
\newtheorem{theorem}{Theorem}[section]
\newtheorem{lemma}[theorem]{Lemma}
\newtheorem{question}[theorem]{Question}
\newtheorem*{t:1}{Theorem \ref{t:1}}
\theoremstyle{definition}
\newtheorem{remark}[theorem]{Remark}
\newtheorem{definition}[theorem]{Definition}
\DeclareMathOperator{\inter}{int}
\DeclareMathOperator{\dist}{dist} \DeclareMathOperator{\Lip}{Lip}
 \DeclareMathOperator{\cl}{cl}
\DeclareMathOperator{\id}{id}
\title{Answer to a question of Kolmogorov}
\author{Rich\'ard Balka}
\address{Alfr\'ed R\'enyi Institute of Mathematics, Hungarian Academy of Sciences,
PO Box 127, 1364 Budapest, Hungary}
\email{balka.richard@renyi.mta.hu}
\thanks{We gratefully acknowledge the support of the
Hungarian Scientific Research Fund grant no.~72655. ME is supported by
the Hungarian Scientific Foundation grants no.~83726 and J\'anos Bolyai
Fellowship. AM is
supported by the EPSRC grant EP/G050678/1.}
\author{M\'arton Elekes}
\address{Alfr\'ed R\'enyi Institute of Mathematics, Hungarian Academy of Sciences,
PO Box 127, 1364 Budapest, Hungary and Institute of Mathematics,
E\"otv\"os Lor\'and University, P\'azm\'any P\'eter s. 1/c, 1117
Budapest, Hungary}
\email{elekes.marton@renyi.mta.hu}
\author{Andr\'as M\'ath\'e}
\address{Mathematics Institute,
University of Warwick, Coventry, CV4 7AL, United Kingdom}
\email{A.Mathe@warwick.ac.uk}
\keywords{Contraction, Lebesgue measure, Lipchitz map, polygon}
\subjclass[2010]{28A75, 26A16}
\begin{document}

\begin{abstract} More than 80 years ago Kolmogorov asked the following question. Let $E\subseteq \RR^{2}$
be a measurable set with $\lambda^{2}(E)<\infty$, where $\lambda^2$ denotes the two-dimensional Lebesgue measure.
Does there exist for every $\varepsilon>0$ a contraction $f\colon E\to
\RR^{2}$ such that $\lambda^{2}(f(E))\geq
\lambda^{2}(E)-\varepsilon$ and $f(E)$ is a polygon? We answer this
question in the negative by constructing a bounded, simply connected open
counterexample. Our construction can easily be modified to yield the analogous result in higher dimensions.
\end{abstract}

\maketitle

\section{Introduction}

The following question was posed by M. Laczkovich in \cite{L}. Let $\lambda^{d}$ stand for the $d$-dimensional Lebesgue measure.

\begin{question}[M. Laczkovich] \label{q:L} Let $E\subseteq \RR^{d}$ $(d\geq 2)$ be a
measurable set with $\lambda^{d}(E)>0$. Does there exist a Lipschitz
onto map $f\colon E\to [0,1]^{d}$?
\end{question}

For $d=2$ the positive answer to Question \ref{q:L} follows from a
result of N. X. Uy \cite{Uy}, and D. Preiss also solved this partial
problem by completely different methods. J. Matou\v{s}ek \cite{M} proved the
following stronger, `absolute constant' version based on a
well-known combinatorial lemma due to Erd\H{o}s and Szekeres.
(For the definition of $1$-Lipschitz map see the Preliminary section.)

\begin{theorem}[J. Matou\v{s}ek] \label{t:M} There exists a constant $c>0$ such that
for any measurable set $E\subseteq \RR^{2}$ with $\lambda^{2}(E)=1$
there exists a 1-Lipschitz onto map $f\colon E\to [0,c]^{2}$.
\end{theorem}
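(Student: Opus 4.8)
The plan is to separate a combinatorial core from a geometric one.

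\smallskip
\noindent\emph{Reductions.} By inner regularity and a standard dyadic approximation (via the Lebesgue density theorem) one may assume $E$ is a finite union of congruent squares of some grid; passing to such a subset of measure $\ge 1/2$ costs only a constant factor in the final $c$, because $[0,c]^2$ is convex, so the nearest point projection onto it is $1$-Lipschitz, and hence by Kirszbraun's theorem a $1$-Lipschitz map of a subset onto $[0,c]^2$ extends to a $1$-Lipschitz map of $E$ onto $[0,c]^2$. Rescaling the grid to unit cells, the task becomes: given a union $E$ of $N$ unit cells of the integer grid, build a $1$-Lipschitz map of $E$ onto a square of side $\ge c_0\sqrt N$, with $c_0$ an absolute constant.

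\smallskip
\noindent\emph{The geometric ingredient} is a folding lemma: any ``thick path'' $P_1,\dots,P_\ell$ of unit cells with $P_k\cap P_{k+1}\ne\emptyset$ maps $1$-Lipschitzly onto a square of side $\ge c_1\sqrt\ell$. One routes the path through the target square in a boustrophedon pattern of unit-width strips; along a straight stretch the map is a contraction in the direction of travel times the identity transverse to it, while at each U-turn, where a naive map would fail to be injective, one spends a stretch of the path of bounded length and maps it onto a half-disc by an explicit ``fan'' map, so that no expansion occurs and the pieces still tile the square. Checking $1$-Lipschitz continuity across the joints is routine.

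\smallskip
\noindent\emph{The heart of the matter}, and where the Erd\H{o}s--Szekeres lemma enters: one must locate inside the $N$ cells a thick path (or boundedly many, laid side by side) carrying a fixed positive proportion $\rho$ of the cells. Cut the cells into $T$ vertical slabs of about $N/T$ cells each and record a representative height $Y_t$ in slab $t$; the Erd\H{o}s--Szekeres monotone subsequence lemma produces $\ge\sqrt T$ slabs on which $Y_t$ is monotone, and after deleting the horizontal and vertical gaps by the one-dimensional straightening map $a\mapsto\lambda^1(A\cap(-\infty,a])$ these line up into a staircase of boxes, i.e.\ a thick path. Since Erd\H{o}s--Szekeres loses a square-root factor in the count, this must be run inside a dichotomy: in the ``concentrated'' case $E$ contains a large axis-parallel combinatorial grid, on which the product map $(x,y)\mapsto(\varphi(x),\psi(y))$ built from the two one-dimensional straightening maps already surjects onto a large square; in the ``spread'' case one argues that the cells cluster into boundedly many thick paths of near-maximal total length. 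Calibrating $T$ and organising this dichotomy so that a fixed proportion $\rho>0$ of the cells lands on thick paths is the step I expect to be the main obstacle. Granting it, the folding lemma sends these paths onto a square of side $\ge c_1\sqrt{\rho N}$, and undoing the rescaling and collecting the constants $c_0,c_1,\rho$ yields the absolute $c$.
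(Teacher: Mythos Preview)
The paper does not prove this theorem. Theorem~\ref{t:M} is stated as a known result of Matou\v{s}ek and attributed to \cite{M}; the only information the paper gives about its proof is the remark that it is ``based on a well-known combinatorial lemma due to Erd\H{o}s and Szekeres.'' There is therefore nothing in the paper to compare your argument against.

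That said, your sketch is in the spirit of Matou\v{s}ek's actual proof: the reduction to a finite union of grid cells, the folding of a long thick path onto a square, and the use of Erd\H{o}s--Szekeres to extract monotone structure are all present there. Where your outline is weakest is exactly where you flag it: the dichotomy between a ``concentrated'' configuration (handled by a product map) and a ``spread'' configuration (handled by Erd\H{o}s--Szekeres plus folding) is the substance of Matou\v{s}ek's argument, and your description of how to run it so that a \emph{fixed positive proportion} of the $N$ cells ends up on boundedly many thick paths is not yet a proof. In Matou\v{s}ek's paper this is done by an inductive/iterative scheme rather than a single clean dichotomy, and getting the constants to close up requires some care. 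If you want to turn your sketch into a self-contained proof, that is the step to write out in full; the rest is, as you say, routine.
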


Question \ref{q:L} is still open for dimensions $d>2$. Theorem
\ref{t:M} states that we can contract every set of the plane with
positive measure onto a square such that it `does not lose too much
from its measure'. Can we do this so that the loss of the measure
is arbitrarily small? It is easy to see that this is not possible if we
require the range to be a square, but how about polygons? Note that by polygons we mean a
wider class of objects than its standard definition does:

\begin{definition}  We say that $P\subseteq \RR^2$ is a
\emph{polygon} if $\partial P$ can be covered by finitely many line segments.
\end{definition}

The next question is due to A. N. Kolmogorov, it was quoted by
P. Alexandroff in a letter written to F. Hausdorff, see \cite{A}
and \cite{E}.

\begin{question}[A. N. Kolmogorov] \label{q} Let $E\subseteq \RR^{2}$ be a measurable set with
$\lambda^{2}(E)<\infty$, and let $\varepsilon>0$. Does there exist a
contraction $f\colon E\to \RR^{2}$ such that $\lambda^{2}(f(E))\geq
\lambda^{2}(E)-\varepsilon$ and $f(E)$ is a polygon?
\end{question}

The main goal of the paper is to answer Question \ref{q} in the negative.

\begin{theorem} \label{thm} There exist a bounded, simply connected open set $U\subseteq
\RR^{2}$ and $\varepsilon>0$ such that if
$f\colon U\to \RR^2$ is a contraction with
$\lambda^{2}(f(U))\geq \lambda^{2}(U)-\varepsilon$ then $f(U)$
is not a polygon.
\end{theorem}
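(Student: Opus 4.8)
The plan is to build a bounded, simply connected open set $U$ out of many thin "spikes" or "teeth" attached to a central region, arranged so that the boundary $\partial U$ carries a large amount of length relative to $\lambda^2(U)$, and in a geometrically spread-out way: the normals to the boundary should point in many different directions. The key heuristic is that a contraction $f$ cannot increase lengths, so $f(\partial U)$ has at most the length of $\partial U$; but if $f(U)$ were a polygon whose area is within $\varepsilon$ of $\lambda^2(U)$, then $\partial(f(U))$ would have to "account for" almost all of the boundary behaviour of $U$ while lying on finitely many lines. The contradiction should come from the fact that finitely many line segments of bounded total length cannot be the image of $\partial U$ under a contraction if $\partial U$ is long and its tangent directions are suitably distributed, unless a definite amount of area is lost.

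Concretely, I would first fix notation: define $U$ as, say, a disk (or square) with a large number $N$ of very thin rectangular teeth removed or added along the boundary, with the teeth thin enough that their total area is tiny compared to the gain in boundary length; do this so that $\lambda^2(U)$ is of order $1$ but $\mathcal{H}^1(\partial U)$ is of order $N$. Then I would prove a lemma of the following shape: if $f$ is a contraction and $f(U)$ is a polygon with $\partial(f(U)) \subseteq \bigcup_{i=1}^{k} L_i$ for line segments $L_i$, then the "defect" $\lambda^2(U) - \lambda^2(f(U))$ is bounded below by some quantity involving $\mathcal{H}^1(\partial U)$, the number of teeth, and the number of lines $k$ — crucially with no control on $k$ in the hypothesis, so the bound must be uniform over all finite $k$. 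The mechanism: each tooth, being long and thin, must be mapped by $f$ into a long thin region; for the total area of $f(U)$ to be large, the images of the teeth must be nearly isometric copies lying almost in their original positions; but then $f(\partial U)$ near the tip of each tooth must turn through nearly $\pi$ within a short arc, while staying on finitely many lines — and one shows that reconciling these turning requirements across all $N$ teeth forces either $f$ to collapse some teeth (losing area) or forces infinitely many lines, both contradictions once $N$ is large and $\varepsilon$ small.

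The main obstacle, and the part requiring the most care, is making the "turning" or "direction-counting" argument robust against an adversarial contraction that is allowed to be wildly non-smooth and to use arbitrarily many lines. A clean way to handle this is via a projection/integral-geometry estimate: for a set $A$ that is a polygon with boundary on $k$ lines, and for each direction $\theta$, the projection $\pr_\theta(\partial A)$ meets any vertical line in boundedly many points, so $\int_0^\pi \mathcal{H}^1(\pr_\theta(\partial A))\,d\theta$ is controlled by the perimeter of $A$ — but the perimeter is bounded by $\mathcal{H}^1(\partial U)$ only if no cancellation helps, and in fact one wants the reverse: since $f$ is a contraction, $\mathcal{H}^1(f(\partial U)) \le \mathcal{H}^1(\partial U)$, yet $\partial(f(U)) \subseteq f(\partial U)$ forces the polygon's perimeter to be at most $\mathcal{H}^1(\partial U)$, which is fine, but does not by itself give area loss. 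So the real work is a quantitative stability statement: a contraction that nearly preserves the area of $U$ must nearly preserve $\mathcal{H}^1$-almost-everything of $\partial U$ including local geometry (this uses that equality in the area/coarea-type inequalities forces $f$ to be close to an isometry on a large set), and then the isometric rigidity together with the tooth geometry gives that $f(\partial U)$ genuinely needs curved pieces — hence cannot lie on finitely many lines without a macroscopic area defect. I would isolate this stability step as the central lemma and prove it first, then deduce Theorem~\ref{thm} by choosing $N$ large and $\varepsilon$ small relative to the constants produced.
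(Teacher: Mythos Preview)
Your plan has a genuine gap, and in fact the set $U$ you describe is almost certainly \emph{not} a counterexample. If the base region is a square and you attach (or remove) finitely many rectangular teeth, then $U$ is itself an open polygon, and the scalings $f=(1-\delta)\,\id$ are contractions with $f(U)$ a polygon and $\lambda(f(U))$ as close to $\lambda(U)$ as you like. If the base is a disk, the nearest-point projection onto a fine inscribed convex polygon (followed by a slight scaling) is a contraction whose image is a polygon with arbitrarily small area loss; the teeth, being straight, cause no obstruction. More fundamentally, your ``direction-counting'' mechanism cannot succeed: the definition of polygon allows \emph{arbitrarily many} line segments, so for any fixed $N$ teeth an adversary may use a polygon with $10N$ sides, absorbing all the turning at the tooth tips. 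You acknowledge this obstacle but never overcome it; the proposed stability lemma (``near area preservation $\Rightarrow$ near isometry $\Rightarrow$ $f(\partial U)$ needs curved pieces'') does not yield a contradiction, because a polygonal boundary with many short edges can sit arbitrarily close to any rectifiable curve.

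The paper's approach is entirely different and avoids length and direction altogether. One takes $U=\inter(B)\setminus(C\times[0,1])$ where $B$ is the closed unit disk and $C\subseteq[0,1]$ is a nowhere dense compact set of \emph{positive} one-dimensional measure. The crucial features are $\cl U=B$ while $\lambda(U)<\lambda(B)$: the boundary of $U$ has positive \emph{two}-dimensional measure. Assuming a sequence of contractions $g_n$ with $\lambda(g_n(U))\ge\lambda(U)-1/n$ and $\lambda(\partial g_n(U))=0$, an Arzel\`a--Ascoli argument gives a $1$-Lipschitz limit $g$ with $\lambda(g(U))=\lambda(U)$; a short lemma then shows $g$ is an isometry, so without loss of generality $g=\id_U$. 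Extending $g_n$ to $B$ and applying the Brouwer fixed point theorem, one shows that $\cl(g_n(U))=\widehat g_n(B)$ must contain a disk $D$ with $\lambda(U)<\lambda(D)$, whence $\lambda(\partial g_n(U))\ge\lambda(D)-\lambda(U)>0$ --- contradicting polygonality, since finitely many segments always have zero area. The decisive idea you are missing is to make $\lambda^2(\partial U)>0$ rather than $\mathcal H^1(\partial U)$ large; that is what makes the argument immune to polygons with unboundedly many sides.
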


In contrast to Question \ref{q:L} the higher dimensional versions
of Question \ref{q} are not more difficult than the original one.
The analogue of Theorem \ref{thm} can be proved similarly for every dimension $d>2$
with the straightforward modifications.

The structure of the paper is as follows. In Section \ref{s:prel} we recall some notation and definitions which we use in this paper.
In Section \ref{s:proof} we prove Theorem \ref{thm}. Finally, in Section \ref{s:open} we collect the open problems.

\section{Preliminaries} \label{s:prel}

Let $B(x,r)$ stand for the closed ball
of radius $r$ centered at $x$. For a set $A\subseteq
\RR^2$ we denote by $\inter A$, $\cl A$ and $\partial A$ the
interior, closure and boundary of $A$, respectively.
A function $f\colon A\to
\RR^{2}$ is said to be \emph{Lipschitz} if there exists a constant $c \in \RR$
such that $|f(x)-f(y)|\leq c |x-y|$ for all $x,y\in A$. The smallest
such constant $c$ is called the Lipschitz constant of $f$ and
denoted by $\Lip(f)$. If $\Lip(f)\leq 1$ then $f$ is a
\emph{1-Lipschitz map}, if $\Lip(f)<1$ then $f$ is a
\emph{contraction}. If $A,B \subseteq \RR^2$ then let
$\dist(A,B)=\inf \{|x-y|: x \in A, y \in B\}$.

For the sake of simplicity, let $\lambda= \lambda^{2}$ stand for the two-dimensional Lebesgue measure.

\section{The proof} \label{s:proof}

First we need the following lemma.

\begin{lemma} \label{l:2} Assume $U\subseteq \RR^{2}$ is a bounded, connected open set and
$f\colon U\to \RR^{2}$ is a 1-Lipschitz map such that $\lambda(f(U))=\lambda(U)$. Then
$f$ is an isometry.
\end{lemma}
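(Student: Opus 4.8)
The plan is to show that a measure-preserving 1-Lipschitz map from a bounded connected open set must be an isometry, by first upgrading the volume hypothesis to a local statement and then showing that $f$ cannot strictly contract in any direction at any point.

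First I would observe that since $f$ is 1-Lipschitz, for every measurable $A \subseteq U$ we have $\lambda(f(A)) \leq \lambda(A)$ (this is the standard fact that Lipschitz maps do not increase measure by more than the $d$-th power of the constant, here $1^2=1$; more carefully, one covers $A$ by small balls and uses that $f$ sends a ball of radius $r$ into a ball of radius $r$). Combined with $\lambda(f(U)) = \lambda(U) < \infty$ and additivity, this forces $\lambda(f(A)) = \lambda(A)$ for \emph{every} measurable $A \subseteq U$: if some $A$ had $\lambda(f(A)) < \lambda(A)$, then writing $U = A \cup (U \setminus A)$ and using $\lambda(f(U)) \leq \lambda(f(A)) + \lambda(f(U\setminus A)) < \lambda(A) + \lambda(U \setminus A) = \lambda(U)$, a contradiction. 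In particular $f$ is injective up to measure zero, and indeed genuinely injective: if $f(x) = f(y)$ with $x \neq y$, pick small disjoint balls $B(x,r), B(y,r) \subseteq U$; then $f(B(x,r)) \cup f(B(y,r))$ is contained in $B(f(x), r)$, so has measure at most $\pi r^2$, while it must have measure $\lambda(B(x,r)) + \lambda(B(y,r)) = 2\pi r^2$ minus the measure of the image of the overlap — one has to be slightly careful, but the point is that two disjoint balls of radius $r$ cannot both inject measure-preservingly into a single ball of radius $r$.

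Next, the core step: I claim $f$ is \emph{locally an isometry}, i.e. $|f(x)-f(y)| = |x-y|$ for all $x,y \in U$. Suppose not: there are $x_0, y_0 \in U$ with $|f(x_0)-f(y_0)| < |x_0 - y_0|$. By the 1-Lipschitz property and a compactness/continuity argument along the segment (or a chaining argument), one can localize this to find a small ball $B = B(z, r) \subseteq U$ and a direction such that $f$ strictly contracts a positive fraction of $B$ in that direction — more precisely, one wants to derive that $\lambda(f(B)) < \lambda(B)$, contradicting the previous paragraph. The cleanest route: by Rademacher's theorem $f$ is differentiable a.e. on $U$; at a point of differentiability the derivative $Df(p)$ is a linear map of operator norm $\leq 1$, and by the area formula $\lambda(f(U)) \leq \int_U |\det Df(p)|\, dp \leq \lambda(U)$ with equality iff $|\det Df(p)| = 1$ a.e., which for a matrix of norm $\leq 1$ forces $Df(p) \in O(2)$ a.e. Then, since $U$ is connected and open, integrating the (a.e. orthogonal, hence locally constant where it exists — this needs the extra argument that an a.e.-orthogonal-valued derivative of a Lipschitz map is in fact continuous and locally constant) derivative along paths shows $f$ is affine with orthogonal linear part, i.e. an isometry.

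The main obstacle I anticipate is the final rigidity step — passing from ``$Df(p) \in O(2)$ for a.e. $p$'' to ``$f$ is globally an isometry on the connected set $U$''. The subtlety is that a priori $Df$ is only defined a.e. and could in principle jump (e.g. $O(2)$ is disconnected). I would handle this by a direct argument avoiding differentiation: show that $f$ preserves distances, as follows. Fix $x, y \in U$ and a point $p$ on the open segment $(x,y)$ lying in $U$ (shrinking to a short sub-segment inside $U$ first, then chaining, using connectedness and openness to join any two points by a polygonal path in $U$). For short segments, use that $f$ restricted to a ball is measure-preserving and 1-Lipschitz, together with an isoperimetric-type or projection argument: if $f$ strictly shortened the segment from $a$ to $b$, then by 1-Lipschitzness $f$ would map the whole ``lens'' (intersection of balls $B(a,s) \cap B(b,t)$ for suitable radii) into a strictly smaller lens, contradicting measure preservation. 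Making this quantitative — exhibiting an explicit region whose image must shrink — is the technical heart of the argument; everything else is soft.
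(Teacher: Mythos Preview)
Your overall architecture matches the paper's: first upgrade the global hypothesis $\lambda(f(U))=\lambda(U)$ to $\lambda(f(A))=\lambda(A)$ for every measurable $A\subseteq U$ via subadditivity, then show $f$ is a local isometry by exhibiting a set whose measure would strictly drop if some distance were contracted, and finally patch local isometries on a connected open set into a global one. That is exactly the paper's route.

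However, your concrete choice of test set is wrong. You propose the \emph{lens} $B(a,s)\cap B(b,t)$ and claim that if $|f(a)-f(b)|<|a-b|$ then $f$ sends this lens into a strictly smaller lens. It is true that $f(B(a,s)\cap B(b,t))\subseteq B(f(a),s)\cap B(f(b),t)$ by $1$-Lipschitzness, but the area of an intersection of two discs of fixed radii is a \emph{decreasing} function of the distance between their centres. So when $|f(a)-f(b)|<|a-b|$ the target lens is \emph{larger}, not smaller, and you get no contradiction. No choice of $s,t$ fixes this monotonicity.

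The paper's remedy is to take the \emph{union} instead: set $r=|x-y|/2$ and $C_{x,y}=B(x,r)\cup B(y,r)\subseteq U$. Then $f(C_{x,y})\subseteq B(f(x),r)\cup B(f(y),r)$, and the area of a union of two discs of fixed radius is an \emph{increasing} function of the distance between centres (since the overlap shrinks). Hence $|f(x)-f(y)|<|x-y|$ gives $\lambda(f(C_{x,y}))<\lambda(C_{x,y})$, and your subadditivity step finishes the local argument. So the ``explicit region whose image must shrink'' that you flagged as the technical heart is simply this union of two touching discs.

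Your alternative Rademacher route (area formula $\Rightarrow |\det Df|=1$ a.e.\ $\Rightarrow Df\in O(2)$ a.e.) is valid up to that point, and you are right to flag the passage from ``$Df\in O(2)$ a.e.'' to ``$f$ is a global isometry'' as nontrivial: $O(2)$ is disconnected and an a.e.\ defined derivative could in principle jump components, so one would need an additional regularity argument (e.g.\ via Weyl's lemma after reducing to $\bar\partial f=0$ or $\partial f=0$). The elementary two-disc argument above sidesteps this entirely and is what the paper does.
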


\begin{proof}
Recall that a map $g\colon A\to\RR^2$ is an isometry if it preserves distances and that this implies that $g$ is a linear map restricted to the set $A\subseteq \RR^2$.

First we prove that $f$ is locally an isometry at every point of $U$. Let $z\in U$ be arbitrary. Then there exists $r>0$ such that $B(z,2r)\subseteq U$. We prove that $f$ is isometry on $B(z,r)$; that is, $|f(x)-f(y)|=|x-y|$ for all $x,y\in B(z,r)$. As $f$ is $1$-Lipschitz, we have $|f(x)-f(y)|\leq |x-y|$ for all $x,y\in U$. Assume to the contrary that there are $x,y\in B(z,r)$ such that $|f(x)-f(y)|<|x-y|$. Let
$$C_{x,y}=B(x, |x-y|/2) \cup B(y, |x-y|/2)$$
be the union of two closed discs of center $x$ and $y$ and radius equal to $|x-y|/2$.
Clearly, $C_{x,y}\subseteq B(z,2r)\subseteq U$. Since $f$ is $1$-Lipschitz, $f(C_{x,y})$ is contained in the union of two discs
$B(f(x), |x-y|/2) \cup B(f(y), |x-y|/2)$. Since $|f(x)-f(y)|<|x-y|$, the area of this union is smaller than the area of $C_{x,y}$, thus $\lambda(f(C_{x,y}))<\lambda(C_{x,y})$.
Applying that $\lambda$ is subadditive and $f$ is $1$-Lipchitz we obtain
$$\lambda\left(f(U)\right)\leq \lambda\left(f(C_{x,y})\right)+\lambda\left(f(U\setminus C_{x,y})\right)<\lambda(C_{x,y})+\lambda\left(U\setminus C_{x,y}\right)=\lambda(U),$$
which is a contradiction.

Since $f$ is locally an isometry at every point of the open and connected set $U$,
all local isometries are restrictions of the same linear map $\RR^2\to\RR^2$. Hence $f$ is an isometry.
\end{proof}

Now we are ready to prove Theorem \ref{thm}.

\begin{proof}[Proof of Theorem \ref{thm}]
Let $B$ be the closed unit ball centered at the origin and
let $C\subseteq [0,1]$ be a nowhere dense compact set with positive one-dimensional Lebesgue measure.
Set $U=\inter(B)\setminus (C\times [0,1])$.
Clearly, $U$ is open and path-connected. It is easy to see that
every simple closed curve can be shrunk to a
point continuously in $U$, so $U$ is simply connected. Clearly, $\cl U =B$ and
$\lambda(U)<\lambda(B)$.

It is enough to prove that there is an $\varepsilon>0$ such that if
$f\colon U\to \RR^2$ is a contraction with $\lambda(f(U))\geq \lambda(U)-\varepsilon$ then
$\lambda\left(\partial\left( f(U)\right)\right)>0$. Assume to the
contrary that for all $n\in \NN^{+}$ there are contractions
$g_n\colon U \to \RR^{2}$ such that
$\lambda(g_{n}(U))\geq \lambda(U)-1/n$ and $\lambda \left(\partial(g_n(U))\right)=0$.
Clearly, we may assume that $\bigcup_{n=1}^{\infty}
g_{n}(U)$ is bounded. Let $\{z_i: i\in \NN\}$ be a dense set in
$U$. By Cantor's diagonal argument we can choose a strictly increasing
subsequence of
the positive integers $\langle n_k \rangle$ such that
for every $i\in \NN$ the limit $\lim_{k\to \infty} g_{n_k}(z_i)$
exists. Since the maps $g_{n_k}$ are contractions, the sequence of functions $\langle g_{n_k} \rangle$ is uniformly
convergent on $U$. Therefore we may assume that $g_{n}$ converges uniformly to $g$ for a map $g\colon U\to \RR^2$.
The uniform convergence implies that $g$ is 1-Lipschitz.

First we prove that $\lambda(g(U))=\lambda(U)$. Since $g$ is $1$-Lipschitz, $\lambda(g(U))\leq \lambda(U)$,
so it is enough to prove the opposite direction. As a continuous image of an
open set, $g(U)$ is $\sigma$-compact, so measurable. Let $\delta>0$ be arbitrary. The regularity of the
Lebesgue measure implies that there is an open set $V$ such that $g(U)\subseteq V$ and $\lambda(V)<\lambda(g(U))+\delta$. Similarly, there exists a
compact set $K$ such that $K\subseteq U$ and $\lambda(U\setminus K)<\delta$. Since the maps $g_n$ are contractions, we obtain for all $n\in \NN^+$
\begin{equation} \label{eq:gnU} \lambda(g_n(U))-\lambda(g_n(K))\leq \lambda(g_n(U\setminus K))\leq \lambda(U\setminus K)<\delta.
\end{equation}
The uniform convergence
$g_n\to g$ yields that there is an integer $L$ such
that for all $n>L$ we have $g_{n}(K)\subseteq V$. Therefore \eqref{eq:gnU} and the definition of
the maps $g_{n}$ imply that for all $n>L$ we have
$$\lambda(g(U))+\delta > \lambda(V) \geq
\lambda(g_{n}(K)) > \lambda(g_n(U))-\delta \geq \lambda(U)-1/n-\delta.$$
As $\delta >0$ is arbitrary, we obtain $\lambda (g(U))\geq  \lambda(U)$, so $\lambda(g(U))=\lambda(U)$.
Then Lemma \ref{l:2} implies that $g$ is an isometry. We may assume that $g$ is the identity, that is, $g=\id_{U}$.

Since $\cl U=B$, one can extend the maps $g_n$ to contractions $\widehat{g}_{n}\colon B\to \RR^{2}$.
Clearly, $\widehat{g}_n\to \id_{B}$ uniformly on $B$.
Let $D\subseteq B$ be a closed ball centered at the origin
such that $\lambda(U)<\lambda(D)<\lambda(B)$. There exists
$M\in \NN$ such that for all $n>M$ we have
\begin{equation} \label{e:1} \max_{x\in B}\left|\widehat{g}_n(x)-x\right|<\dist(D,\partial B).
\end{equation}
We prove that for all $n>M$,
\begin{equation} \label{e:2} D\subseteq \cl(g_{n}(U)).
\end{equation}
Let us fix $n>M$. As $g_{n}(U)$ is dense in $\widehat{g}_{n}(B)$, we obtain
$\cl(g_{n}(U))=\widehat{g}_{n}(B)$. Thus we need to prove $D \subseteq \widehat{g}_{n}
(B)$ for \eqref{e:2}. Assume to the contrary that there is an $x_0\in D$ such
that $x_0 \notin \widehat{g}_{n} (B)$. Set $r=\dist(D,\partial B)$. Then $B(x_0,r)\subseteq B$, so we can define the map
$\phi \colon B(x_0,r) \to \RR^2$ by $\phi(x)=-\widehat{g}_{n}(x)+x+x_0$. Inequality \eqref{e:1} implies $|\phi(x)-x_0|<r$, so
$\phi(B(x_0,r))\subseteq B(x_0,r)$. Since $x_0 \notin \widehat{g}_{n} (B)$, we obtain that
$\phi(x)\neq x$ for all $x\in B(x_0,r)$. Hence $\phi$ is a continuous self-map of the ball $B(x_0,r)$
without any fixed points, which contradicts the Brouwer Fixed Point Theorem \cite[Proposition 4.4.]{Fu}. Thus \eqref{e:2} holds.

As the maps $g_n$ are contractions, we have $\lambda(g_{n}(U))\leq
\lambda(U)$. Therefore $\lambda(U)<\lambda(D)$ and \eqref{e:2}
imply, for all $n>M$, that
\begin{align*} \lambda(\partial g_n(U))&\geq \lambda(\cl(g_{n}(U))\setminus g_{n}(U)) \\
&\geq \lambda(\cl(g_{n}(U)))-\lambda(g_{n}(U)) \\
&\geq \lambda(D)-\lambda(U)>0.
\end{align*}
Thus $\lambda(\partial g_n(U))>0$, which contradicts the definition of $g_n$. The proof is complete.
\end{proof}

\begin{remark} It is easy to see that for all Lebesgue null sets $N\subseteq \RR^2$ the sets $U\Delta N$ are also counterexamples to Question \ref{q}.
On the other hand, one can show that for all $\varepsilon>0$ there exist a contraction $f\colon U\to \RR^2$ and a Lebesgue null set
$N\subseteq \RR^2$ such that $\lambda^{2}(f(U))\geq \lambda^{2}(U)-\varepsilon$ and $f(U)\Delta N$ is a polygon.
Thus $U$ will not be a counterexample to Question \ref{q:4} below.
\end{remark}

\section{Open Questions} \label{s:open}

The most important question is the following.

\begin{question} \label{q:2} Let $K\subseteq
\RR^{2}$ be a compact set, and let $\varepsilon>0$. Does there exist a
contraction $f\colon K\to \RR^{2}$ such that $\lambda^{2}(f(K))\geq
\lambda^{2}(K)-\varepsilon$ and $f(K)$ is a polygon?
\end{question}

In order to answer Question \ref{q:2} we consider the next
question.

\begin{question} \label{q:3} Let $C\subseteq
\RR^{2}$ be a compact set with $\lambda^{2}(C)=0$, and
let $\varepsilon>0$. Does there exist a contraction $f\colon C\to
\RR^{2}$ such that $|f(x)-x|\leq \varepsilon$ for all $x\in C$ and
$f(C)$ can be covered by finitely many line segments?
\end{question}

If the compact set $C$ is a counterexample to Question
\ref{q:3} with $\varepsilon>0$, then consider $K=C\cup R$, where $R$ is a closed ring
such that the bounded component of its complement contains $C$.
Then $K$ is a counterexample to Question \ref{q:2}, the sketch of the proof is the following. Assume to the
contrary that there are contractions $f_n\colon K\to \RR^{2}$ $(n\in
\NN^{+})$ such that $\lambda(f_{n}(K))\geq \lambda(K)-1/n$ and
$f_{n}(K)$ is a polygon, that is, $\partial f_{n}(K)$ can be covered by finitely many line segments.
Similarly as in the proof of Theorem \ref{thm}, one can show that $f_{n}$ converges uniformly to an isometry, $f$. We
may assume that $f=\id_{K}$. Let us fix $n\in \NN^{+}$ such that $|f_n(x)-x|\leq \varepsilon$ for all $x\in C$ and $f_n(C)\cap f_{n}(R)=\emptyset$.
As $f_n$ is a contraction and $C$ has zero Lebesgue measure, $f_n(C)$ also has zero measure, so $\dist(f_n(C),f_n(R))>0$ implies $f_n(C)\subseteq \partial f_{n}(K)$.
Therefore $f_n(C)$ can be covered by finitely many line segments, which contradicts the choice of $C$ and $\varepsilon$.

\begin{remark} We do not know whether the Sierpi\'nski carpet is a counterexample to Question \ref{q:3}.
\end{remark}

Finally, our last question is the following.

\begin{question} \label{q:4} Let $E\subseteq \RR^{2}$ be a measurable set with
$\lambda^{2}(E)<\infty$, and let $\varepsilon>0$. Do there exist a
contraction $f\colon E\to \RR^{2}$ and a Lebesgue null set $N\subseteq \RR^2$ such that $\lambda^{2}(f(E))\geq
\lambda^{2}(E)-\varepsilon$ and $f(E)\Delta N$ is a polygon? Is this true at least for compact sets?
\end{question}

\subsection*{Acknowledgements}
The authors are indebted to M.~Laczkovich for some valuable remarks and helpful suggestions.

\end{document}